\def\moverlay{\mathpalette\mov@rlay}
\def\mov@rlay#1#2{\leavevmode\vtop{%
   \baselineskip\z@skip \lineskiplimit-\maxdimen
   \ialign{\hfil$\m@th#1##$\hfil\cr#2\crcr}
}}
\newcommand{\charfusion}[3][\mathord]{
    #1{\ifx#1\mathop\vphantom{#2}\fi
        \mathpalette\mov@rlay{#2\cr#3}
      }
    \ifx#1\mathop\expandafter\displaylimits\fi}
\newtheorem{thm}{Theorem}
\newtheorem*{thm*}{Theorem}
\newtheorem*{rmk}{Remark}
\newtheorem{cor}{Corollary}
\newtheorem*{cor*}{Corollary}
\theoremstyle{definition}
\newtheorem{defn/}{Definition}
\newtheorem*{defn*/}{Definition}
\newcommand{\defnendsymbol}%
{%
  \mathbin{\rotatebox[origin=c]{-45}{$\parallel$}}%
}
\newenvironment{defn*}
  {%
   \pushQED{\qed}\begin{defn*/}}
  {\popQED\end{defn*/}}
\theoremstyle{theorem}
\newcommand*{\catFont}[1]{\mathsf{#1}} 
\newcommand*{\catVarFont}[1]{\mathcal{#1}}
\newcommand{\Grp}{\catFont{Grp}}
\newcommand{\catC}{\catVarFont{C}}
\newcommand{\catD}{\catVarFont{D}}
\DeclareMathOperator{\dif}{d \!}
\newcommand*{\pd}[3][]{\ensuremath{
\ifinner
\tfrac{\partial{^{#1}}#2}{ \partial{#3^{#1}} }
\else
\dfrac{\partial{^{#1}}#2}{ \partial{#3^{#1}} }
\fi
}}
\newcommand*{\od}[3][]{\ensuremath{
\ifinner
\tfrac{\dif{^{#1}}#2}{ \dif{#3^{#1}} }
\else
\dfrac{\dif{^{#1}}#2}{ \dif{#3^{#1}} }
\fi
}}
\newcommand*{\interior}[1]{ {\kern0pt#1}^{\mathrm{o}} }
\newbox\gnBoxA
\newdimen\gnCornerHgt
\newdimen\gnArgHgt
\def\godelnum #1{%
\setbox\gnBoxA=\hbox{$#1$}%
\gnArgHgt=\ht\gnBoxA%
\ifnum     \gnArgHgt<\gnCornerHgt \gnArgHgt=0pt%
\else \advance \gnArgHgt by -\gnCornerHgt%
\fi \raise\gnArgHgt\hbox{$\ulcorner$} \box\gnBoxA %
\raise\gnArgHgt\hbox{$\urcorner$}}
\newcommand{\teq}{\triangleq}
\newcommand*{\important}[1]{\textcolor{MidnightBlue}{\emph{#1}}}
\newcommand{\Gph}{\mathsf{Gph}}
\author{Chris Grossack\\ (they/them)}
\title{The Right Angled Artin Group Functor as a Categorical Embedding}
\begin{document}
\maketitle

\begin{abstract}
  It has long been known that the combinatorial properties of a graph $\Gamma$
  are closely related to the group theoretic properties of its 
  \emph{right angled artin group} (raag). It's natural to ask if the 
  graph \emph{homomorphisms} are similarly related to the group homomorphisms 
  between two raags. The main result of this paper shows that there 
  is a purely algebraic way to characterize the raags amongst groups, and 
  the graph homomorphisms amongst the group homomorphisms. 
  As a corollary we 
  present a new algorithm for recovering $\Gamma$ from its raag.
\end{abstract}

\section{Introduction}
\label{intro}
  For us, a \important{graph} $\Gamma$ with underlying vertex set $V$ is a 
  symmetric, reflexive relation on $V$. A \important{graph homomorphism} from 
  a graph $(V,\Gamma)$ to $(W,\Delta)$ is a 
  function $\varphi : V \to W$ so that $(v_1, v_2) \in \Gamma \implies (\varphi v_1, \varphi v_2) \in \Delta$.
  These assemble into a category, which we call $\mathsf{Gph}$.

  Given a graph $\Gamma$ with vertex set $V$, we can form a group $A\Gamma$, the 
  \important{right angled artin group} (raag) associated to $\Gamma$, defined as

  \[ A\Gamma \teq \langle v \in V \mid [v_1, v_2] = 1 \text{ whenever } (v_1,v_2) \in \Gamma \rangle .\]

  For example, if $K_n$ is a complete graph on $n$ vertices then 
  $AK_n \cong \mathbb{Z}^n$. If $\Delta_n$ is a discrete graph on $n$ vertices
  $A\Delta_n \cong \mathbb{F}_n$ is a free group on $n$ generators. 
  If $\square$ is the graph with $4$ vertices $a,b,c,d$ and 
  four edges $(a,b)$, $(b,c)$, $(c,d)$, and $(d,a)$ then $A \square \cong 
  \langle a, c \rangle \times \langle b, d \rangle \cong \mathbb{F}_2 \times \mathbb{F}_2$.
  In this sense, raags allow us to \emph{interpolate} between free and free
  abelian groups. 

  Raags are of particular interest to geometric group theorists because
  of their connections to the fundamental groups of closed hyperbolic
  $3$-manifolds \cite{servatiusSurfaceSubgroupsGraph1989} 
  and to the mapping class groups of hyperbolic surfaces 
  \cite{kimGeometryCurveGraph2014}. Moreover, raags were instrumental in
  the resolution of the Virtual Haken Conjecture \cite{agolVirtualHakenConjecture2013}
  due to their close connection with the CAT(0) geometry of cube complexes. 
  See \cite{bestvinaGeometricGroupTheory2013} for an overview.

  Importantly, the combinatorial structure of $\Gamma$ is closely related to the 
  algebraic structure of $A \Gamma$, with useful information flow in both directions. 
  For instance, the cohomology of $A \Gamma$ is the \emph{exterior face algebra} 
  of $\Gamma$ \cite{sabalkaRigidityIsomorphismProblem2009},
  $A\Gamma$ factors as a direct product if and only if $\Gamma$ factors
  as a join of two graphs \cite{servatiusAutomorphismsGraphGroups1989},
  and we can compute the Bieri-Neumann-Strebel invariant $\Sigma^1(A\Gamma)$
  from just information in $\Gamma$ \cite{meierBieriNeumannStrebelInvariantsGraph1995}.
  This correspondence can be pushed remarkably far, and recently it was
  shown that \emph{expander graphs}\footnote{which are really sequences of graphs}
  can be recognized from the cohomology of their raags
  \cite{floresExpandersRightangledArtin2021}! For more information about the 
  close connection between the combinatorics of $\Gamma$ and the algebra of
  $A\Gamma$, see 
  \cite{floresExpandersRightangledArtin2021, koberdaGeometryCombinatoricsRightAngled2022}.
  
  With this context, it is natural to ask whether the combinatorics
  of graph homomorphisms are \emph{also} closely connected to the algebra of
  group homomorphisms between raags. For a particular example, one might ask 
  if there is a purely algebraic way to recognize when a group homomorphism 
  between raags is $A \varphi$ for some homomorphism $\varphi$ of their 
  underlying graphs.

  The main result of this paper shows
  that the answer is \emph{yes} in a very strong sense. We prove that the raag 
  functor $A$ is an equivalence between the category of graphs $\mathsf{Gph}$ 
  and the category of groups equipped with a 
  coalgebra structure\footnote{A kind of \important{descent data}}
  that we will describe shortly. As corollaries, we obtain a new way of
  recognizing the raags amongst the groups, and the graph homomorphisms 
  amongst the group homomorphisms. This moreover gives a new algorithm for 
  recovering the underlying graph of a raag from nothing but its isomorphism 
  type.

  Crucial for the proof of this theorem is the fact that 
  $A : \mathsf{Gph} \to \Grp$ has a right adjoint, the \important{commutation graph}
  functor $C : \Grp \to \mathsf{Gph}$ which sends a group $G$ to the graph 
  whose vertices are elements of $G$ and where $(g_1,g_2) \in CG \iff [g_1,g_2]=1$. 
  This is surely well known to 
  experts\footnote{It's implicit in the ``universal property of raags'' given 
  in \cite{koberdaRightangledArtinGroups2012}, for instance, and is stated as
  such in \cite{servatiusAutomorphismsGraphGroups1989}} but is not often 
  mentioned in the literature. This is likely because of the common convention that graphs 
  have no self loops, whereas the adjunction requires us to work with graphs
  with a self loop at each vertex. Of course, this does not appreciably 
  change the combinatorics, and we feel it is a small price to pay for the 
  categorical clarity this adjunction provides.

  Unsurprisingly, the commutation graph and related constructions have already 
  been of interest to combinatorialists for many years 
  \cite{baumeisterCommutingGraphsOdd2009,
  dolinarMaximalDistancesCommuting2012,
  giudiciDiametersCommutingGraphs2010,
  arvindRecognizingCommutingGraph2022,
  cameronGraphsDefinedGroups2022}, and the complement of the commutation 
  graph was even the subject of a (now proven) conjecture of Erd\H{o}s
  \cite{neumannProblemPaulErdos1976}.

  With the commutation graph functor $C$ defined, we can state the main result
  of this paper:

  \begin{thm*}
    The right angled artin group functor $A : \mathsf{Gph} \to \mathsf{Grp}$ is comonadic. 

    That is, $A$ is an equivalence of categories between $\Gph$ and 
    the category $\mathsf{Grp}_{AC}$ of groups equipped with an $AC$-coalgebra structure,
    and group homomorphisms that are moreover $AC$-cohomomorphisms. 
  \end{thm*}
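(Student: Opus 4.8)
The plan is to apply the formal dual of Beck's monadicity theorem to the adjunction $A \dashv C$. Since $A$ is a left adjoint with specified right adjoint $C$, comonadicity reduces to two conditions: that $A$ reflects isomorphisms, and that $\Gph$ has, and $A$ preserves, equalizers of every parallel pair $f,g : \Gamma \rightrightarrows \Delta$ for which $Af, Ag$ carries a split equalizer in $\Grp$. The category $\Gph$ is complete, with the equalizer of $f,g$ computed as the full subgraph $\iota : \Gamma_{f=g} \hookrightarrow \Gamma$ spanned by the vertices $v$ with $f(v)=g(v)$; so the existence half is automatic and all of the content lies in conservativity and preservation.

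For conservativity, suppose $A\varphi$ is an isomorphism for some $\varphi : \Gamma \to \Delta$. Abelianizing, $A\varphi$ induces an isomorphism $\Z^{V(\Gamma)} \to \Z^{V(\Delta)}$ sending each basis vector $e_v$ to $e_{\varphi(v)}$; injectivity and surjectivity of this lattice map force $\varphi$ to be a bijection on vertices. It then remains to check that $\varphi$ reflects edges, and here I would use the basic rigidity of raags that two standard generators commute iff their vertices are adjacent: if $(\varphi v, \varphi w)$ were an edge while $(v,w)$ were not, then the nontrivial commutator $[v,w]$ would satisfy $A\varphi([v,w]) = [\varphi v, \varphi w] = 1$, contradicting injectivity of the isomorphism $A\varphi$. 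Hence $\varphi$ is a graph isomorphism and $A$ is conservative.

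The \textbf{main obstacle} is the preservation clause, which is genuinely delicate because $A$ does \emph{not} preserve arbitrary equalizers: already when $\Gamma$ is the two-vertex discrete graph and $f,g$ agree on $a$ but send $b$ to two distinct yet commuting vertices of $\Delta$, the group-theoretic equalizer of $Af, Ag$ contains parasitic conjugates such as $bab^{-1}$ that are invisible to the graph equalizer $\Gamma_{f=g} = \{a\}$. The plan is to show that the hypothesis of an $A$-split equalizer rules these parasites out. Given split-equalizer data for $Af, Ag$ in $\Grp$ — in particular a retraction witnessing that $Af$ is a split monomorphism — I would combine the splitting maps with the parabolic retractions $A\Gamma \twoheadrightarrow A(\Gamma_{f=g})$ that fix the vertices of $\Gamma_{f=g}$ and kill the rest, to prove that every element equalizing $Af$ and $Ag$ already lies in the special subgroup $A(\Gamma_{f=g})$. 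The heart of the argument is a normal-form/parabolic-intersection computation showing that the splitting forces the equalizing subgroup to be parabolic, so that the canonical monomorphism $A\iota : A(\Gamma_{f=g}) \to A\Gamma$ is exactly the equalizer of $Af, Ag$. Once this is in place, both Beck conditions hold and the comparison functor $\Gph \to \Grp_{AC}$ is an equivalence.
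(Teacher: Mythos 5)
Your overall architecture --- Beck's comonadicity theorem applied to $A \dashv C$ --- is the same as the paper's, and your conservativity argument (abelianization forces a vertex bijection; commutation rigidity of generators forces edge reflection) is correct and in fact more self-contained than the paper's, which simply cites Droms. The genuine gap is in the preservation clause, which you yourself flag as the main obstacle: you never actually prove that $A$ preserves equalizers of $A$-split pairs. What you give is a plan (``combine the splitting maps with the parabolic retractions\dots\ a normal-form/parabolic-intersection computation'') with no argument for its central claim, namely that the splitting data forces the group-theoretic equalizer $\{x \in A\Gamma : Af(x) = Ag(x)\}$ to coincide with the special subgroup $A(\Gamma_{f=g})$. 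That claim is exactly where all the content of the theorem lives --- it occupies the entire technical section of the paper --- so leaving it as a sketch leaves the proof incomplete.

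Moreover, your choice of the \emph{precise} form of Beck's theorem makes this step genuinely harder than it needs to be. The splitting maps $s$ and $t$ in an $A$-split equalizer are arbitrary group homomorphisms; they need not be of the form $A\varphi$ for any graph map, need not send vertices anywhere near vertices, and so do not interact with Green's normal form in any evident way. It is not clear how your ``parabolic intersection'' computation would get off the ground from such data. The paper instead uses the crude (coreflexive-pair) form of the theorem: there the hypothesis hands you a common retraction $\rho$ with $\rho\alpha = \rho\beta = 1_\Gamma$ which \emph{is} a graph homomorphism, and the paper's central-form argument uses $\rho$ twice in an essential way --- once to see that the central forms of $\alpha g = \beta g$ have minimal length (a shorter representative could be pushed down by $\rho$ to shorten $g$ itself), and once to see that $\rho$ inverts both $\alpha$ and $\beta$ on the letters of each central block, forcing $\alpha\gamma_i = \beta\gamma_i$. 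The cleanest repair of your proof is to switch to the coreflexive-pair version (any sufficient criterion for comonadicity is fine); otherwise you owe a genuine argument that arbitrary splitting homomorphisms force parabolicity. A small side issue: in your motivating counterexample, adjacency of $f(b)$ and $g(b)$ alone does not make $bab^{-1}$ equalize $Af$ and $Ag$; you also need both to be adjacent to $f(a)$ (take $\Delta$ a triangle, say), though the intended point that $A$ fails to preserve general equalizers is correct.
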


  The group $ACG$ is freely generated by symbols $[g]$ for each $g \in G$, 
  with relations saying $[g][h] = [h][g]$ in $ACG$ if and only if $gh = hg$ in $G$. 
  Write $\epsilon_G : ACG \to G$ for the map sending each $[g] \mapsto g$.
  Additionally, write $\delta : ACG \to AC(ACG)$ for the map sending each
  $[g] \mapsto [[g]]$.

  Now merely unwinding the category theoretic definitions gives the following corollary:

  \begin{cor*}[Main Corollary]
    An abstract group $G$ is isomorphic to a raag if and only if it admits a 
    group homomorphism $\mathfrak{g} : G \to ACG$ so that the following two 
    diagrams commute:

    \[
        \begin{tikzcd}
        G & ACG && G   & ACG \\
          & G   && ACG & AC(ACG)
        \arrow["\mathfrak{g}", from=1-1, to=1-2]
        \arrow["{\epsilon_G}", from=1-2, to=2-2]
        \arrow["{1_G}"', from=1-1, to=2-2]
        \arrow["\mathfrak{g}"', from=1-4, to=2-4]
        \arrow["\mathfrak{g}", from=1-4, to=1-5]
        \arrow["\delta"', from=2-4, to=2-5]
        \arrow["AC\mathfrak{g}", from=1-5, to=2-5]
        \end{tikzcd}
    \]

    Moreover, a group homomorphism $f : G \to H$ between raags is 
    $A \varphi$ for some graph homomorphism $\varphi$ if and only if it 
    respects these structure maps in the sense that 

    \[
        \begin{tikzcd}
        G & H \\
        ACG & ACH
        \arrow["\mathfrak{g}", from=1-1, to=2-1]
        \arrow["\mathfrak{h}", from=1-2, to=2-2]
        \arrow["f", from=1-1, to=1-2]
        \arrow["ACf"', from=2-1, to=2-2]
        \end{tikzcd}
    \]    

    commutes.
  \end{cor*}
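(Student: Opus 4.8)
The plan is to invoke the comonadicity theorem (the dual of Beck's monadicity theorem). Since the paper has already established the adjunction $A \dashv C$, comonadicity of $A$ reduces to three conditions: (i) $A$ has a right adjoint; (ii) $A$ reflects isomorphisms; and (iii) $\Gph$ has, and $A$ preserves, equalizers of those parallel pairs $f, g : \Gamma \rightrightarrows \Delta$ for which $Af, Ag$ admits a split equalizer in $\Grp$. Condition (i) is exactly the adjunction $A \dashv C$, so all of the work lives in (ii) and (iii), and I would present them in that order.

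For (ii) I would pass to abelianizations: $A\Gamma$ abelianizes to the free abelian group $\Z^V$ on the vertex set, and $A\varphi$ abelianizes to the map $\Z^V \to \Z^W$ carrying the basis vector $v$ to $\varphi v$. If $A\varphi$ is an isomorphism then so is this map, which immediately forces $\varphi$ to be a bijection on vertices. It then remains to see that $\varphi$ reflects edges: if some non-edge $(v_1, v_2)$ of $\Gamma$ had an edge as its image in $\Delta$, then $[v_1, v_2]$ would be a nontrivial element of $A\Gamma$ (two generators of a raag commute precisely when they are joined by an edge) lying in the kernel of $A\varphi$, contradicting injectivity. Hence $\varphi^{-1}$ is also a graph homomorphism and $\varphi$ is an isomorphism of graphs.

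Condition (iii) is where the real content lies, and is the step I expect to be the main obstacle. The category $\Gph$ has all equalizers — the equalizer of $f, g$ is the subgraph $\Lambda \hookrightarrow \Gamma$ induced on the agreement set $U := \{v \in V : fv = gv\}$ — so the difficulty is entirely in showing that $A$ \emph{preserves} the equalizers of $A$-split pairs. Concretely, I must show that $A\Lambda \to A\Gamma$ is the equalizer of $Af, Ag$ in $\Grp$, i.e.\ that the \emph{special subgroup} generated by $U$ coincides with the literal equalizer subgroup $\{x \in A\Gamma : Af(x) = Ag(x)\}$. One inclusion is immediate, since each generator in $U$ is fixed by both maps. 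The reverse inclusion is \emph{false for general pairs}: for example $Af$ and $Ag$ can both factor through abelianization and share an enormous equalizer even when $U = \emptyset$, and this is exactly the pathology that the split hypothesis excludes. The splitting supplies a homomorphism $t : A\Delta \to A\Gamma$ with $t \circ Af = \id$, which forces $f$ to be injective on vertices and, via the identity $t \circ Ag = e \circ s$, pins down the retraction $s : A\Gamma \to E$ onto the equalizer on generators. The plan is to combine this retraction with the parabolic retraction $A\Gamma \to A\Lambda$ that kills every generator outside $U$, and to use raag normal forms to show the two agree on $E$, forcing every equalizer element to be supported on $U$. Isolating and proving this combinatorial lemma is the crux of the whole argument.

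Finally, I would observe that the same analysis produces the explicit quasi-inverse to the comparison functor promised in the abstract: an $AC$-coalgebra structure $\mathfrak{g} : G \to ACG$ is exactly the data that singles out a distinguished generating set of $G$, and the two coalgebra axioms from the Main Corollary are precisely what guarantee that the commutation graph $\Gamma$ on this set satisfies $A\Gamma \cong G$ as coalgebras, rather than merely admitting an arbitrary presentation. This both recovers $\Gamma$ from $G$ and, through the compatibility square, identifies which group homomorphisms arise as $A\varphi$.
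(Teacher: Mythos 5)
Your skeleton is a legitimate route: the three conditions you list are the \emph{precise} form of Beck's comonadicity theorem (creation of equalizers of $A$-split pairs, in the ``reflects isos $+$ has $+$ preserves'' formulation), whereas the paper uses the \emph{crude} form, requiring preservation of equalizers of \emph{coreflexive} pairs, i.e.\ pairs with a common retract $\rho$ \emph{in} $\Gph$. Your treatment of isomorphism reflection is also fine, and is arguably cleaner than the paper's, which cites Droms' (much stronger) rigidity theorem; your abelianization argument plus edge-reflection via nontriviality of $[v_1,v_2]$ for non-adjacent generators is self-contained and correct.

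The genuine gap is condition (iii), which you correctly identify as the crux and then do not prove: you only state a plan (``combine this retraction with the parabolic retraction \ldots and use raag normal forms to show the two agree on $E$'') and explicitly defer ``isolating and proving this combinatorial lemma.'' That lemma \emph{is} the theorem --- it is exactly what Section \ref{proof} of the paper is devoted to --- so as it stands nothing has been proven. Worse, your choice of the precise form of Beck makes this step materially harder than the paper's version. In the paper's coreflexive-pair setup, the splitting datum is a \emph{graph} homomorphism $\rho : \Delta \to \Gamma$ with $\rho\alpha = \rho\beta = 1_\Gamma$, and the normal-form argument leans on this at every turn: $\rho$ forces adjacency to be reflected ($v \sim w$ iff $\alpha v \sim \alpha w$), it pushes any shorter central form of $\alpha g$ back to a shorter one for $g$ (giving minimality), and it simultaneously inverts $\alpha$ and $\beta$ on the letters of each central block, which is what finally yields $\alpha\gamma_i = \beta\gamma_i$. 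In your setup the splitting maps $t : A\Delta \to A\Gamma$ and $s : A\Gamma \to E$ are \emph{arbitrary group homomorphisms}: nothing makes them send generators anywhere near generators, respect supports, or interact with central forms, so none of the above control is available, and it is not at all clear how you would show your two retractions agree on $E$. To close the gap you should either (a) actually prove your combinatorial lemma under the mere group-level splitting hypothesis (expect this to be substantially harder than the paper's argument), or (b) switch to the crude form of Beck, where the hypothesis hands you a graph-level retraction and the central-form argument of the paper goes through.
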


  \begin{rmk}
    In particular, there is a purely algebraic way to recognize the 
    raags amongst the groups and the image of the graph homomorphisms 
    amongst the group homomorphisms between raags.

    This additionally gives us a new way to recover $\Gamma$ from the 
    abstract isomorphism class of $A \Gamma$, and shows it is 
    decidable (even efficient!) to check whether any particular
    group homomorphism between raags came from a graph homomorphism.
  \end{rmk}

  Our proof uses some category theory that might not be familiar to all
  readers, so in Section \ref{review} we will briefly review the machinery 
  of \important{comonadic descent}, which is the main technical tool for 
  the proof (which is the subject of Section \ref{proof}). 
  First, though, in Section \ref{eg} we give an example to show that 
  category theory is not needed in order to apply our results.
  This section might also be of interest to 
  those learning category theory looking for toy examples of comonadic descent, 
  since it is usually applied in more complicated situations than this%
  \footnote{Indeed, this is how the author came upon this result.}. 
  Lastly, in Section \ref{computing} we discuss the algorithmic consequences 
  of the main result.

  Throughout this paper, we make the notational convention that graph theoretic 
  concepts are written with greek letters and group theoretic concepts with 
  roman letters. The coalgebraic structure maps are written in fraktur font.

\section{An Instructive Example}
\label{eg}

It's important to note that applying this result requires no knowledge of 
the deep category theory used in its proof. Let's begin with a simple example
of how the result can be used to detect whether a group 
homomorphism came from a graph homomorphism or not.

Let $\Gamma = \{ v \}$ and $\Delta = \{ w \}$ be two one-vertex graphs.
Then $A \Gamma = \langle v \rangle$ and $A \Delta = \langle w \rangle$,
and we want to detect when a homomorphism between these groups came from a
homomorphism of their underlying graphs.

Recall that $CG$, the commutation graph of $G$, has a vertex $[g]$ for 
each $g \in G$, with an edge relating $[g]$ and $[h]$ exactly when 
$g$ and $h$ commute in $G$. So $C \langle v \rangle$ is a complete 
graph on $\mathbb{Z}$ many vertices labelled by $[v^n]$.

Then the group $ACG$ is freely generated by the symbols $[g]$, for $g \in G$,
subject to relations saying $[g][h] = [h][g]$ in $ACG$ if and only if 
$gh = hg$ in $G$. So $AC \langle v \rangle$ is the free abelan group with 
generators $[v^n]$.

It's not hard to see that the map 
$\mathfrak{v} : \langle v \rangle \to AC \langle v \rangle$
sending $v \mapsto [v^1]$ satisfies the axioms from the Main Corollary. 
The existence of such a $\mathfrak{v}$ tells us that 
$\langle v \rangle$ must be a raag, which it is%
\footnote{More generally, if we know $\Gamma$, then the map 
$A\Gamma \to AC(A\Gamma)$ sending each generator $\gamma \mapsto [\gamma]$ 
will always satisfy the axioms.}.

\bigskip

Let's first look at a map that \emph{does} come from a graph homomorphism,
for instance $f : \langle v \rangle \to \langle w \rangle$ given by $fv = w$.

The corollary says to consult the following square:

\[
  \begin{tikzcd}
    {\langle v \rangle} && {\langle w \rangle} \\
    \\
    {\left \langle [v^n] \mid [v^n][v^m] = [v^m][v^n] \right \rangle} 
    && 
    {\left \langle [w^n] \mid [w^n][w^m] = [w^m][w^n] \right \rangle}
    \arrow["{v \mapsto w}", from=1-1, to=1-3]
    \arrow["{[v^n] \mapsto [w^n]}", from=3-1, to=3-3]
    \arrow["{v \mapsto [v^1]}"{description}, from=1-1, to=3-1]
    \arrow["{w \mapsto [w^1]}"{description}, from=1-3, to=3-3]
  \end{tikzcd}
\]

and since this is quickly seen to commute, we learn that $f$ is of the form 
$A \varphi$ for some graph homomorphism (as indeed it is).

\bigskip

Next, let's look at a map which \emph{doesn't} come from a graph homomorphism,
like $f : \langle v \rangle \to \langle w \rangle$ given by $fv = w^2$.

Now our square is

\[\begin{tikzcd}
	{\langle v \rangle} && {\langle w \rangle} \\
	\\
	{\langle v^n \mid [v^n,v^m] = 1 \rangle} && {\langle w^n \mid [w^n, w^m] = 1 \rangle}
	\arrow["{v \mapsto w^2}", from=1-1, to=1-3]
  \arrow["{[v^n] \mapsto [w^{2n}]}", from=3-1, to=3-3]
	\arrow["{v \mapsto [v^1]}"{description}, from=1-1, to=3-1]
	\arrow["{w \mapsto [w^1]}"{description}, from=1-3, to=3-3]
\end{tikzcd}\]

which does \emph{not} commute (even though it seems to at first glance). 
Indeed, if we chase the image of $v$ around the top right of the square, then 
we see

\[ v \mapsto w^2 \mapsto [w^1]^2 \]

If instead we chase around the lower left of the square, we get:

\[ v \mapsto [v^1] \mapsto [w^2] \]

since $[w^1]^2 \neq [w^2]$ in this group (recall $AC \langle w \rangle$ is 
freely generated by the symbols $[w^n]$), we have successfully detected that 
$f$ did \emph{not} come from a graph homomorphism!

Importantly, this same approach works even if we merely know the 
coalgebra structures on $G$ and $H$. Thus we don't need to know their
underlying graphs to detect the graph homomorphisms%
\footnote{Though we will see later that the coalgebra structure 
actually lets us recover the underlying graphs as well.}! 

As a last aside, let's mention what the structure map 
$\mathfrak{g} : G \to ACG$ does. Elements of $ACG$ are 
formal words in the elements of $G$. Then, intuitively, 
$\mathfrak{g}(g) = [\gamma_1][\gamma_2]\cdots[\gamma_k]$ decomposes 
$g$ as a formal product of the vertices making up $g$. This means 
that we can recover the vertices of $\Gamma$ as those $g$ 
so that $\mathfrak{g}(g) = [g]$ is a word of length $1$, as we prove 
in Section \ref{computing}

\section{A Brief Review of Comonadic Descent}
\label{review}

Recall that an \important{adjunction}
$(L : \catC \to \catD) \dashv (R : \catD \to \catC)$ is a 
pair of functors equipped with a natural isomorphism

\[ \text{Hom}_\mathcal{D}(LC,D) \cong \text{Hom}_\mathcal{C}(C,RD). \]

Of particular interest for us is the adjunction $A \dashv C$ specifying the 
universal property of raags.

Recall moreover that a \important{comonad} $W : \catD \to \catD$ is a 
functor equipped with natural transformations $\epsilon : W \Rightarrow 1_\catD$ 
and $\delta : W \Rightarrow WW$ so that the following diagrams of 
natural transformations commute:

\[
    \begin{tikzcd}
    & W &&& W & WW \\
    W & WW & W && WW & WWW
    \arrow["{1_W \cdot \epsilon}", Rightarrow, from=2-2, to=2-1]
    \arrow["{\epsilon \cdot 1_W}"', Rightarrow, from=2-2, to=2-3]
    \arrow["\delta", Rightarrow, from=1-2, to=2-2]
    \arrow["{1_W}"', Rightarrow, from=1-2, to=2-1]
    \arrow["{1_W}", Rightarrow, from=1-2, to=2-3]
    \arrow["\delta"', Rightarrow, from=1-5, to=2-5]
    \arrow["\delta", Rightarrow, from=1-5, to=1-6]
    \arrow["{\delta \cdot 1_W}"', Rightarrow, from=2-5, to=2-6]
    \arrow["{1_W \cdot \delta}", Rightarrow, from=1-6, to=2-6]
    \end{tikzcd}
\]

Dually, a \important{monad} is a functor $M : \catC \to \catC$ equipped with 
natural transformations $\eta : 1_\catC \Rightarrow M$ and 
$\mu : MM \Rightarrow M$ satisfying diagrams opposite those above. A precise
definition can be found in Chapter $4$ of \cite{borceuxCategoriesStructures1994}.

Every adjunction $L \dashv R$ gives rise to a monad $RL$ and a comonad 
$LR$. In particular, the raag adjunction gives us a comonad 
$AC : \Grp \to \Grp$, which is our primary object of study.

Monads and comonads find application in settings as varied as algebraic 
geometry and number theory
\cite{grothendieckTechniqueDescenteTheoremes1958, 
borceuxCategoriesStructures1994,
borceuxMonadicApproachGalois2010},
universal algebra 
\cite{borceuxCategoriesStructures1994,
adamekAlgebraicTheoriesCategorical2011,
bojanczykRecognisableLanguagesMonads2015,
awodeyCoalgebraicDualBirkhoff2000,
hylandCategoryTheoreticUnderstanding2007},
probability theory 
\cite{giryCategoricalApproachProbability1982,
culbertsonCategoricalFoundationBayesian2014,
leinsterCODENSITYULTRAFILTERMONAD2013},
and computer science 
\cite{moggiNotionsComputationMonads1991,
depaivaDialecticaComonads2021,
ghaniAlgebrasCoalgebrasMonads2001,
ruttenUniversalCoalgebraTheory2000}.
Relevant for us is the theory of \important{(co)monadic descent}, 
which comes from gluing conditions in algebraic geometry, 
and is reviewed in this context in Section $4.7$ of 
\cite{borceuxCategoriesStructures1994}.

Given a comonad $W$, a \important{$W$-coalgebra} is an object
$G \in \catD$ equipped with an arrow $\mathfrak{g} : G \to WG$ so that the
diagrams in Figure \ref{fig:coalg} commute. A \important{$W$-cohomomorphism}
between coalgebras $(G,\mathfrak{g})$ and $(H,\mathfrak{h})$ is an arrow $f : G \to H$
in $\mathcal{D}$ compatible with $\mathfrak{g}$ and $\mathfrak{h}$, in the sense that 
Figure \ref{fig:cohom} commutes. When $W$ is clear from context, we simply call these
coalgebras and cohomomorphisms, and they assemble into a category $\catD_W$
which admits a faithful functor $U : \catD_W \to \catD$ that 
simply forgets the structure map $\mathfrak{g}$.

\begin{figure}
    \[
        \begin{tikzcd}
        G & WG && G & WG \\
        & G && WG & WWG
        \arrow["\mathfrak{g}", from=1-1, to=1-2]
        \arrow["{\epsilon_G}", from=1-2, to=2-2]
        \arrow["{1_G}"', from=1-1, to=2-2]
        \arrow["\mathfrak{g}"', from=1-4, to=2-4]
        \arrow["\mathfrak{g}", from=1-4, to=1-5]
        \arrow["\delta"', from=2-4, to=2-5]
        \arrow["W\mathfrak{g}", from=1-5, to=2-5]
        \end{tikzcd}
    \]
    \caption{The defining diagrams for a coalgebra}
    \label{fig:coalg}
\end{figure}

\begin{figure}
    \[
        \begin{tikzcd}
        G & H \\
        WG & WH
        \arrow["\mathfrak{g}", from=1-1, to=2-1]
        \arrow["\mathfrak{h}", from=1-2, to=2-2]
        \arrow["f", from=1-1, to=1-2]
        \arrow["Wf"', from=2-1, to=2-2]
        \end{tikzcd}
    \]    
    \caption{The defining diagram for a cohomomorphism}
    \label{fig:cohom}
\end{figure}

Abstract nonsense shows that for any adjunction $L \dashv R$, the 
essential image of $L$ lands inside the category of coalgebras $\mathcal{D}_{LR}$.
That is, every object $LX \in \mathcal{D}$ is a $LR$-coalgebra, 
where the structure map is given by $L \eta_X : LX \to LRLX$, and every 
$L \varphi$ is a $LR$-cohomomorphism.

In our special case, $\eta : \Gamma \to CA\Gamma$ is the map sending 
each $v \in \Gamma$ to $v^1 \in CA\Gamma$. Then the above says that the 
functor $A : \mathsf{Gph} \to \mathsf{Grp}$ factors through the 
category of coalgebras $\mathsf{Grp}_{AC}$ as follows:

\[\begin{tikzcd}
	{\mathsf{Gph}} & {\mathsf{Grp}_{AC}} & {\mathsf{Grp}} \\
	\Gamma & {(A\Gamma, A \eta)} & A\Gamma
	\arrow[maps to, from=2-1, to=2-2]
	\arrow[maps to, from=2-2, to=2-3]
	\arrow["A", from=1-1, to=1-2]
	\arrow["U", from=1-2, to=1-3]
\end{tikzcd}\]

We will show that $A$ is actually an equivalence of categories 
$\mathsf{Gph} \simeq \Grp_{AC}$. This tells us that a group is of the 
form $A\Gamma$ if and only if it's a coalgebra, and a group homomorphism is 
of the form $A\varphi$ if and only if it's a cohomomorphism!

The main tool for proving this equivalence is Beck's famed
\important{(Co)Monadicity Theorem}\footnote{The original manuscript due to Beck 
was unpublished, but widely distributed. A scan is available at
\cite{beckBeckMonadicityTheorem1968}, but this is also proven as
Theorem 4.4.4 in \cite{borceuxCategoriesStructures1994}. Both of 
these prove the statement for \emph{monads}, which is then dualized to give
the \emph{co}monadicity theorem we use.}, which says

\begin{thm*}[Beck, 1968]
  To show that a left adjoint
  $(L : \catC \to \catD) \dashv (R : \catD \to \catC)$ 
  witnesses $L$ as an equivalence of categories $\catC \simeq \catD_{LR}$%
  \footnote{Such an adjunction $L \dashv R$ is called \emph{comonadic}.},
  it suffices to show

  \begin{enumerate}
      \item $L$ reflects isomorphisms (that is, whenever 
        $L \varphi : L\Gamma \cong L\Delta$ is an isomorphism in $\catD$, 
        then $\varphi$ must have already been an isomorphism in $\catC$)
      \item $\catC$ has, and $L$ preserves, equalizers of coreflexive pairs%
        \footnote{We will recall the definition of a coreflexive pair in 
        section \ref{proof}}
  \end{enumerate}
\end{thm*}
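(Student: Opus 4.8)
The plan is to prove the comonadic statement directly, though it is of course just the formal dual of Beck's monadicity theorem. Work with the comonad $W = LR$ on $\catD$, its coalgebra category $\catD_W$, the forgetful functor $U : \catD_W \to \catD$, and the comparison functor $K : \catC \to \catD_W$ given by $KX = (LX, L\eta_X)$, so that $UK = L$. The goal is to exhibit a right adjoint $R'$ to $K$ for which \emph{both} the unit and the counit are natural isomorphisms, since an adjunction with invertible unit and counit is exactly an adjoint equivalence, whence $K$ witnesses $\catC \simeq \catD_W$ and $L$ is comonadic.

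First I would construct the candidate $R'$. Given a coalgebra $(D,\mathfrak{d})$, consider the parallel pair $\eta_{RD}, R\mathfrak{d} : RD \rightrightarrows RLRD$ in $\catC$. I claim this is a \emph{coreflexive} pair, i.e. it has a common left inverse: the map $R\epsilon_D$ satisfies $R\epsilon_D \circ \eta_{RD} = 1_{RD}$ by the triangle identity, and $R\epsilon_D \circ R\mathfrak{d} = R(\epsilon_D \circ \mathfrak{d}) = 1_{RD}$ by the counit law of the coalgebra. By hypothesis (2), $\catC$ has equalizers of coreflexive pairs, so I may define $R'(D,\mathfrak{d})$ to be this equalizer, with inclusion $e_{(D,\mathfrak{d})} : R'(D,\mathfrak{d}) \to RD$; functoriality of $R'$ and the adjunction $K \dashv R'$ then follow formally by combining the universal property of the equalizer with the hom-set bijection of $L \dashv R$. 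Unwinding this identifies the unit as the factorization $\eta'_X : X \to R'KX$ of $\eta_X$ through the equalizer, and the counit as the coalgebra map $\epsilon'_{(D,\mathfrak{d})}$ whose underlying arrow in $\catD$ is $\epsilon_D \circ L e_{(D,\mathfrak{d})} : LR'(D,\mathfrak{d}) \to D$.

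The technical heart is a split-equalizer lemma: for \emph{every} coalgebra $(D,\mathfrak{d})$, the fork $D \xrightarrow{\mathfrak{d}} WD \rightrightarrows WWD$ with parallel maps $W\mathfrak{d}$ and $\delta_D$ is a split equalizer, split by $\epsilon_D : WD \to D$ and $\epsilon_{WD} : WWD \to WD$. Checking this is a short diagram chase using the two coalgebra laws, the comonad counit law $\epsilon_{WD}\circ\delta_D = 1_{WD}$, and naturality of $\epsilon$. Since split equalizers are absolute, $D$ remains the equalizer of $(W\mathfrak{d}, \delta_D)$ after applying \emph{any} functor, and this is exactly what lets the two hypotheses bite. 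For the counit: applying $L$ to the equalizer defining $R'(D,\mathfrak{d})$, hypothesis (2) (preservation of coreflexive equalizers) shows $LR'(D,\mathfrak{d})$ is the equalizer of $(LR\mathfrak{d}, L\eta_{RD}) = (W\mathfrak{d}, \delta_D)$; but the absolute split equalizer says $D$ is an equalizer of that same pair via $\mathfrak{d}$, so the canonical comparison is an isomorphism, and a one-line check against $\epsilon_D\circ\mathfrak{d} = 1_D$ identifies $\epsilon'_{(D,\mathfrak{d})}$ with it. Since $U$ reflects isomorphisms (a free fact for coalgebra forgetful functors), $\epsilon'$ is invertible. For the unit: applying $L$ to the equalizer defining $R'KX$ identifies $LR'KX$ with the equalizer of $(LRL\eta_X, L\eta_{RLX})$, while the split equalizer for the coalgebra $KX = (LX, L\eta_X)$ shows $LX$ is an equalizer of the same pair via $L\eta_X$; hence $L\eta'_X$ is the comparison isomorphism, and by hypothesis (1) ($L$ reflects isomorphisms) $\eta'_X$ is itself an isomorphism.

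With both unit and counit invertible, $K \dashv R'$ is an adjoint equivalence and the theorem follows. I expect the main obstacle to be bookkeeping rather than conceptual: one must (a) verify coreflexivity so that hypothesis (2) applies to precisely the equalizers being formed, and (b) match the parallel pairs appearing after applying $L$ with those coming from the absolute split equalizer, taking care that the maps produced by the two universal properties are \emph{literally} $L\eta'_X$ and the underlying arrow of $\epsilon'_{(D,\mathfrak{d})}$, not merely abstractly isomorphic to them. The honest construction of the adjunction $K \dashv R'$ out of the equalizer is routine, but it is where most of the fiddly naturality verifications live.
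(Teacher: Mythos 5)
The paper never proves this statement: it imports Beck's theorem as a black box, with a footnote pointing to Beck's unpublished manuscript and to Theorem 4.4.4 of Borceux, noting that those sources prove the \emph{monadic} version, which is then dualized. So there is no in-paper proof to compare against, and your proposal should be judged as a standalone proof of the (coreflexive, or ``crude'') form of the comonadicity theorem --- and as such it is correct and is essentially the standard argument, dual to the one in Borceux. The load-bearing steps all check out: the pair $(R\mathfrak{d}, \eta_{RD})$ is indeed coreflexive via $R\epsilon_D$ (triangle identity on one leg, the coalgebra counit law on the other), so hypothesis (2) applies to exactly the equalizers you form; the fork $\mathfrak{d} \colon D \to WD \rightrightarrows WWD$ with pair $(W\mathfrak{d}, \delta_D)$ is a split equalizer (coassociativity gives the fork, $\epsilon_D\mathfrak{d} = 1_D$, $\epsilon_{WD}\delta_D = 1_{WD}$, and naturality of $\epsilon$ give the splitting), hence absolute; and since $\delta = L\eta R$, the pair obtained by applying $L$ to the defining equalizer of $R'(D,\mathfrak{d})$ is \emph{literally} $(W\mathfrak{d}, \delta_D)$, so the two universal properties compare on the nose, with hypothesis (1) converting the invertibility of $L\eta'_X$ into that of $\eta'_X$. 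Two tiny remarks: your ``one-line check'' identifying the counit's underlying arrow $\epsilon_D \circ Le$ with the equalizer comparison actually runs through naturality of $\epsilon$ and $\epsilon_{WD}\delta_D = 1_{WD}$ (one computes $\mathfrak{d}\circ\epsilon_D\circ Le = \epsilon_{WD}\circ W\mathfrak{d}\circ Le = \epsilon_{WD}\circ\delta_D\circ Le = Le$) rather than the coalgebra law $\epsilon_D\mathfrak{d} = 1_D$ as written; and the deferred verification that $K \dashv R'$ with the stated unit and counit is, as you say, routine --- the hom-bijection of $L \dashv R$ carries cohomomorphisms $KX \to (D,\mathfrak{d})$ exactly onto maps $X \to RD$ equalizing $(R\mathfrak{d}, \eta_{RD})$, by naturality of $\eta$. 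Neither point is a gap.
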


This gives us our outline for proving the main theorem:

\begin{thm*}[Main Theorem]
  The right angled artin group functor $A : \Gph \to \Grp$ restricts to 
  an equivalence of categories $A : \Gph \simeq \Grp_{AC}$ between the 
  category of graphs and the full subcategory of groups equipped with an 
  $AC$-coalgebra structure.
\end{thm*}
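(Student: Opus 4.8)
The plan is to verify the two hypotheses of Beck's comonadicity theorem for the adjunction $A \dashv C$: namely that (i) $A$ reflects isomorphisms, and (ii) $\Gph$ has, and $A$ preserves, equalizers of coreflexive pairs. Throughout I would lean on two structural facts about raags, both provable by elementary normal-form and retraction arguments. First, for any induced (``full'') subgraph inclusion $\iota \colon \Gamma' \hookr \Gamma$, the induced map $A\iota \colon A\Gamma' \to A\Gamma$ is a split monomorphism: the homomorphism $A\Gamma \to A\Gamma'$ sending each generator outside $\Gamma'$ to $1$ and fixing the rest is a well-defined retraction, since it respects every commutation relation, and moreover $A\iota$ carries reduced words to reduced words, so it preserves supports in the sense that $\mathrm{supp}(A\iota\,x) = \iota(\mathrm{supp}(x))$. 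Second, abelianizing gives $A\Gamma^{\mathrm{ab}} \cong \Z^{V}$ functorially, with a vertex function $\varphi \colon V \to W$ inducing the evident map of free abelian groups on bases.

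Condition (i) is the easier half. Suppose $A\varphi$ is an isomorphism. Passing to abelianizations, the induced map $\Z^{V} \to \Z^{W}$ sends the basis vector $v$ to the basis vector $\varphi v$, and such a map is an isomorphism exactly when $\varphi$ is a bijection of vertex sets; so $\varphi$ is a vertex bijection. It remains to see that $\varphi$ reflects edges. Given a nonloop edge $(\varphi v_1, \varphi v_2) \in \Delta$, the generators $\varphi v_1, \varphi v_2$ commute, so $A\varphi([v_1,v_2]) = 1$, and since $A\varphi$ is injective this forces $[v_1,v_2]=1$ in $A\Gamma$. But in a raag two distinct generators commute only across an edge: if $(v_1,v_2)\notin\Gamma$ then the retraction onto the induced subgraph on $\{v_1,v_2\}$, which is a free group $\mathbb{F}_2$, would send $[v_1,v_2]$ to a nontrivial element. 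Hence $(v_1,v_2)\in\Gamma$, and together with the reflexive loops this shows $\varphi$ is a graph isomorphism.

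Condition (ii) is where the real work and the role of coreflexivity lie, and I expect it to be the main obstacle. Equalizers in $\Gph$ are computed on vertices: the equalizer of $f,g\colon \Gamma \to \Delta$ is the full subgraph $E$ on the agreement set $\{v : fv = gv\}$, so $\Gph$ has all equalizers. I must show that $A\iota \colon AE \to A\Gamma$ is the equalizer of $Af, Ag$ in $\Grp$; the delicate point is that $A$ is a \emph{left} adjoint and so has no general reason to preserve a limit. That $AE$ lands inside the equalizer subgroup $\mathrm{Eq} = \{x : Af(x)=Ag(x)\}$ is immediate, and $A\iota$ is injective by the first structural fact, so the crux is the reverse inclusion $\mathrm{Eq} \subseteq \langle E\rangle$. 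Here I would use coreflexivity: a common retraction $t\colon \Delta \to \Gamma$ with $tf = tg = 1_\Gamma$ forces $f$ and $g$ to be injective on vertices, since they acquire a left inverse, and then forces them to be induced-subgraph embeddings — if $f$ sent a non-edge to an edge it would kill a nontrivial commutator, contradicting that $At \circ Af = 1_{A\Gamma}$ makes $Af$ injective. Consequently $Af$ and $Ag$ preserve supports exactly, so that for $x \in A\Gamma$ with $S = \mathrm{supp}(x)$ one has $\mathrm{supp}(Af(x)) = f(S)$ and $\mathrm{supp}(Ag(x)) = g(S)$. If $Af(x) = Ag(x)$ then $f(S) = g(S)$, and applying $t$ vertexwise, using $tf = tg = 1_\Gamma$, yields $fv = gv$ for every $v \in S$; that is, $S \subseteq E$ and hence $x \in \langle E\rangle = AE$.

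The reason I flag (ii) as the hard part is that the naive invariant — abelianization — is genuinely insufficient: a commutator has trivial image in $\Z^{V}$ yet nonempty support, so detecting membership in $\langle E\rangle$ requires tracking the full support through normal forms rather than mere exponent sums. The conceptual payoff is that coreflexivity is exactly the hypothesis upgrading $f,g$ from arbitrary graph maps to support-preserving induced embeddings, which is precisely what makes the support bookkeeping close up. With both Beck conditions in hand, the comonadicity theorem immediately delivers the equivalence $A \colon \Gph \simeq \Grp_{AC}$.
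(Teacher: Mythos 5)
Your proof is correct, and while it follows the same Beck's-comonadicity outline as the paper (that outline is essentially dictated by the statement), both substantive steps are executed along genuinely different lines. For isomorphism reflection, the paper cites Droms' rigidity theorem, whereas you give a short self-contained argument: abelianization forces $\varphi$ to be a vertex bijection, and the retraction of $A\Gamma$ onto the two-generator subgroup $\mathbb{F}_2$ shows distinct non-adjacent generators never commute, so $\varphi$ reflects edges. This is arguably better matched to what is actually needed, since ``reflects isomorphisms'' is a statement about a given morphism $\varphi$, while Droms' theorem is about abstract isomorphism of the groups; your route replaces a heavyweight citation with two elementary lemmas. For preservation of coreflexive equalizers, the paper fixes a left-greedy central (Green) form $g = w_0 w_1 \cdots w_k$, argues both image factorizations are minimal length (via $\rho$), invokes uniqueness of the central form to match blocks up to permutation, and then uses that $\rho$ inverts both maps on each block's letter set. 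You instead isolate one structural fact --- that full-subgraph embeddings carry reduced words to reduced words, hence satisfy $\mathrm{supp}(Af\,x) = f(\mathrm{supp}\,x)$ --- deduce $f(\mathrm{supp}\,x) = g(\mathrm{supp}\,x)$ from $Af(x) = Ag(x)$, and apply the common retraction $t$ to upgrade this set equality to pointwise agreement $fv = gv$ on the support, so $\mathrm{supp}(x)$ lies in the agreement set and $x \in AE$. Both arguments rest on the same normal-form theory from Green's thesis, but yours trades the central-form block bookkeeping (and the somewhat delicate minimal-length claim) for the single cleaner invariant of support preservation. One small simplification you missed: once the retraction $t$ is available, edge reflection for $f$ and $g$ needs no group theory at all, since $(fv_1, fv_2) \in \Delta$ gives $(v_1, v_2) = (tfv_1, tfv_2) \in \Gamma$ directly --- the commutator-killing argument is only needed in the isomorphism-reflection step, where no retraction exists.
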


\begin{proof}
  By Beck's comonadicity theorem, it suffices to check the two conditions 
  above.

  Condition $(1)$ is a classical result due to Droms 
  \cite{dromsIsomorphismsGraphGroups1987}, 
  so it remains to check $(2)$. It's well known that $\mathsf{Gph}$
  is complete%
  \footnote{one quick way to see this is to note that it's \emph{topologically concrete}
  in the sense of \cite{adamekAbstractConcreteCategories2009}}, and thus has all equalizers. 

  In the next section we'll recall the definition of a coreflexive pair, 
  and show that $A$ really does preserve their equalizers. This will 
  complete the proof.
\end{proof}

\section{The Raag Functor Preserves Equalizers of Coreflective Pairs}
\label{proof}

A \important{coreflexive pair} is a pair of arrows with a common retract. 
That is, a diagram

\[\begin{tikzcd}
	\Gamma && \Delta
	\arrow["\alpha", shift left=3, from=1-1, to=1-3]
	\arrow["\beta"', shift right=3, from=1-1, to=1-3]
	\arrow["\rho"{description}, from=1-3, to=1-1]
\end{tikzcd}\]

where $\rho \alpha = 1_\Gamma = \rho \beta$.

Now, we want to show that if $\Theta$ is the equalizer of $\alpha$ and $\beta$,
as computed in $\mathsf{Gph}$, 
then $A\Theta$ should still be the equalizer of $A \alpha$ and $A \beta$, 
as computed in $\mathsf{Grp}$.
For ease of notation, we will confuse $\alpha$ and $\beta$ with $A \alpha$
and $A \beta$, since 
$(A\alpha)(v_1^{n_1} v_2^{n_2} \cdots v_k^{n_k}) = (\alpha v_1)^{n_1} (\alpha v_2)^{n_2} \cdots (\alpha v_k)^{n_k}$.

Now, $\Theta$ is quickly seen to be the full subgraph of $\Gamma$
on the vertices where $\alpha v = \beta v$. 
So then $A\Theta = \langle v \mid \alpha v = \beta v \rangle \leq A \Gamma$.
If instead we compute the equalizer of $A \alpha$ and $A \beta$ in $\Grp$,
we get $G = \{ g \mid \alpha g = \beta g \} \leq A\Gamma$.

So showing that $A \Theta = G$ amounts to showing that, provided $\alpha$ 
and $\beta$ admit a common retract $\rho$, each $g$ 
with $\alpha g = \beta g$ is a word in those vertices $v$ with $\alpha v = \beta v$.

\begin{thm}
    The right angled artin group functor $A$ preserves equalizers of coreflexive pairs
\end{thm}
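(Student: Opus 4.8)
The plan is to show that $A\Theta = G$ as subgroups of $A\Gamma$, where $\Theta$ is the equalizer in $\Gph$ and $G = \{g \in A\Gamma \mid \alpha g = \beta g\}$ is the equalizer in $\Grp$. As noted in the excerpt, $A\Theta \leq G$ is automatic, since any word in vertices fixed by the pair is itself fixed. The real content is the reverse inclusion: every $g \in A\Gamma$ with $\alpha g = \beta g$ must be expressible as a word in the vertices $v$ satisfying $\alpha v = \beta v$. The crucial hypothesis I expect to exploit is the common retract $\rho$, which we have not yet used and which must be what distinguishes coreflexive pairs from arbitrary pairs.

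The natural tool is the normal form theorem for raags: every element $g \in A\Gamma$ has a well-defined reduced expression $g = v_1^{n_1} \cdots v_k^{n_k}$, unique up to the \emph{shuffle equivalence} generated by swapping adjacent commuting generators. The \emph{support} of $g$, the set of vertices appearing in any (equivalently every) reduced word for $g$, is therefore an invariant of $g$. My strategy is to take $g \in G$ and argue that every vertex $v$ in the support of $g$ satisfies $\alpha v = \beta v$, i.e.\ lies in $\Theta$; this immediately gives $g \in A\Theta$. So the goal reduces to: \textbf{coreflexivity forces the support of any equalized element to consist of equalized vertices.}

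Here is where I would bring in the retract. Suppose toward a contradiction that some $v$ in the support of $g$ has $\alpha v \neq \beta v$. Applying $\rho$ to the identity $\alpha g = \beta g$ and using $\rho\alpha = 1 = \rho\beta$ is trivially satisfied, so the retract condition alone on $g$ says nothing directly; the leverage must come from comparing supports. The key observation I would try to establish is that $\alpha$ and $\beta$, being induced by graph maps, send a reduced word to a word whose letters are the images of the original letters, and that because $\rho\alpha = \rho\beta = 1_\Gamma$, for each vertex $v$ we have $\rho(\alpha v) = v = \rho(\beta v)$. This pins down the images tightly: $\alpha v$ and $\beta v$ are vertices of $\Delta$ both mapping back to $v$ under $\rho$. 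I would then analyze the equation $\alpha g = \beta g$ at the level of normal forms in $A\Delta$, tracking how the supports $\alpha(\operatorname{supp} g)$ and $\beta(\operatorname{supp} g)$ must match up, and use $\rho$ to transport any discrepancy back to a contradiction in $A\Gamma$. The precise bookkeeping here, reconciling the two normal forms letter by letter while respecting commutation, is the step I expect to be the main obstacle.

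Concretely, I anticipate the argument runs by induction on the length $k$ of a reduced word for $g$, peeling off a leftmost letter. The delicate point is that ``leftmost'' is only defined up to commutation, so I would need to work with the set of possible initial letters (those generators that can be shuffled to the front) and show that the retract condition guarantees at least one such initial letter is equalized, allowing me to cancel it on both sides of $\alpha g = \beta g$ and recurse. An alternative, possibly cleaner, route would be to prove the general categorical fact directly: that $A$ preserves equalizers of coreflexive pairs because the relevant subgroup $G$ is \emph{convex} or \emph{support-closed} in $A\Gamma$, a property that coreflexivity supplies but that can fail for non-coreflexive pairs (matching the known fact that $A$ does \emph{not} preserve all equalizers). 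I would keep both the normal-form induction and this structural reformulation in mind, and expect the final proof to hinge on correctly formalizing how $\rho$ rules out ``mixed'' supports.
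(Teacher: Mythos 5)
You have the right skeleton --- reduce to showing that every letter in a reduced word for an equalized element $g$ is itself equalized, use the normal form theorem for raags, and exploit the retraction through the identity $\rho(\alpha v) = v = \rho(\beta v)$. This matches the paper's strategy. But your proposal stops exactly where the proof has to start: you explicitly defer ``the precise bookkeeping'' of comparing the normal forms of $\alpha g$ and $\beta g$, and the one observation that makes this comparison legitimate is missing. A graph homomorphism need \emph{not} carry reduced words to reduced words, so a priori the normal form of $\alpha g$ in $A\Delta$ may look nothing like the word $(\alpha v_1)^{n_1}\cdots(\alpha v_k)^{n_k}$: syllables can merge or cancel after mapping, and then neither your support-matching argument nor your peel-off-a-leftmost-letter induction can even begin. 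The second, crucial use of $\rho$ --- the one the paper makes --- closes precisely this hole: if $\alpha g$ admitted a representation shorter than $(\alpha v_1)^{n_1}\cdots(\alpha v_k)^{n_k}$, then applying $\rho$ to it would yield a representation of $g = \rho\alpha g$ shorter than its reduced word, a contradiction. Hence the image words are themselves minimal, and only now does uniqueness of the normal form in $A\Delta$ apply to them.

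Once that is in place, the step you call ``the main obstacle'' dissolves in two lines, with no induction and no worry about which letters can be shuffled to the front. Minimality plus uniqueness says the words $(\alpha v_1)^{n_1}\cdots(\alpha v_k)^{n_k}$ and $(\beta v_1)^{n_1}\cdots(\beta v_k)^{n_k}$ agree up to shuffles, so their multisets of syllables coincide: for each $i$ there is a $j$ with $\alpha v_i = \beta v_j$. Applying $\rho$ gives $v_i = \rho\alpha v_i = \rho\beta v_j = v_j$, hence $\alpha v_i = \beta v_i$, so every vertex in the support of $g$ lies in $\Theta$ and $g \in A\Theta$. (The paper organizes this blockwise via Green's central form and a bijection argument within each block, but the content is identical.) So: right strategy and right identification of where $\rho$ enters at the vertex level, but the proof as proposed is incomplete, and the missing ingredient --- that $\rho$ forces $\alpha$ and $\beta$ to preserve minimality of representations --- is the actual mathematical content of the theorem, not routine bookkeeping.
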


\begin{proof}
    Since $\rho$ is a graph
    homomorphism, we see that $v$ and $w$ are $\Gamma$-related if and only if $\alpha v$
    and $\alpha w$ (equivalently $\beta v$ and $\beta w$, equivalently $\alpha v$ and $\beta w$) 
    are $\Delta$-related. Thus $v$ and $w$ commute in $A \Gamma$ if and only 
    if their images under $\alpha$ and $\beta$ commute in $A \Delta$.

    In Theorem 3.9 of her thesis \cite{greenGraphProductsGroups1990}, 
    Green proves that elements of $A \Gamma$ have a normal form as words 
    in the vertices of $\Gamma$\footnote{In fact, she proves something slightly more general}.
    Following the exposition of Koberda \cite{koberdaRightangledArtinGroups2012} 
    and others, we call a word $w \in A \Gamma$ \important{central} if the 
    letters in $w$ pairwise commute. This happens if and only if the letters 
    in $w$ form a clique in $\Gamma$. We say that $w$ is in 
    \important{central form} if it is a product of central words 
    $w = w_1 w_2 \cdots w_k$. If we stipulate that we are ``left greedy'' 
    in the sense that no letter in $w_{i+1}$ commutes with each letter of $w_i$%
    \footnote{so that we first make $w_1$ as long as possible, then make $w_2$ as long as possible, and so on},
    then the central form is unique up to commuting the letters in each $w_i$.
    See also Section 3.3 of \cite{charneyIntroductionRightangledArtin2007} for a summary.

    Now suppose that $\alpha g = \beta g$. Fix such a central form
    $g = w_0 w_1 \ldots w_k$, and look at 

    \[ (\alpha w_0) (\alpha w_1) \ldots (\alpha w_k) = (\beta w_0) (\beta w_1) \ldots (\beta w_k) \]

    these representations of $\alpha g = \beta g$ are both minimal length, as 
    we could hit a shorter representation with $\rho$ in order to get a 
    shorter representation for $g$. Then uniqueness of the central form
    says that each $\alpha w_i$ and $\beta w_i$ are equal up to permuting the 
    letters in each. 
    
    We restrict attention to each 
    $w_i = \gamma_1^{n_1} \gamma_2^{n_2} \ldots \gamma_k^{n_k}$ separately, say

    \[ 
    (\alpha \gamma_1^{n_1}) (\alpha \gamma_2^{n_2}) \ldots (\alpha \gamma_k^{n_k}) = 
    \delta_1^{n_1} \delta_2^{n_2} \ldots \delta_k^{n_k} =
    (\beta \gamma_1^{n_1}) (\beta \gamma_2^{n_2}) \ldots (\beta \gamma_k^{n_k})
    \]

    If we can show that actually $\alpha \gamma_i = \beta \gamma_i$ for each $i$,
    then we'll be done.

    But $\alpha$ and $\beta$ give injections from $\{ \gamma_1 \ldots, \gamma_k \}$
    to $\{ \delta_1, \ldots, \delta_k \}$, which are in fact bijections since we're
    dealing with finite sets of the same cardinality. 

    Moreover, by assumption $\rho$ provides an inverse for $\alpha$ \emph{and} 
    for $\beta$!

    Then $\alpha$ and $\beta$ must be the same map on this set, and in particular
    each $\gamma_i$ satisfies $\alpha \gamma_i = \beta \gamma_i$, as desired.
\end{proof}

\section{Can we Really Compute These?}
\label{computing}

It is well known that the problem 
``is a finitely presented group $G$ isomorphic to a raag'' 
is undecidable. Indeed, being isomorphic to a raag is a 
\emph{Markov property} in the sense of Definition 3.1 in \cite{millerDecisionProblemsGroups1992}
so Theorem 3.3 in the same paper guarantees this problem is undecidable.

Let's work with the next best thing, then, and suppose we're given a 
finitely presented group $G$ and a promise that it \emph{is} a raag 
(though we are not given its underlying graph). How much can we learn 
about the combinatorics of its underlying graph from just $G$?

First, we must find an $AC$-coalgebra structure on $G$ -- that is, a group 
homomorphism $\mathfrak{g} : G \to ACG$ satisfying the conditions from 
Figure \ref{fig:coalg}. Since $ACG$ is a raag, it has solvable word problem, so 
we can enumerate all homomorphisms $G \to ACG$ and check if they satisfy the 
axioms. We will eventually find such a $\mathfrak{g}$ since we were 
promised that $G$ is abstractly isomorphic to a raag, so this algorithm 
terminates. Recall also that that if we happen to already know the underlying 
graph that we have an explicit formula for the coalgebra structure. The unique 
map sending each generator $\gamma \in A \Gamma$ to $[\gamma] \in ACA\Gamma$ 
always works.

Once we know the coalgebra structures on $G$ and $H$, we can already 
efficiently check whether a group homomorphism $f : G \to H$ came from a 
graph homomorphism.

\begin{thm}
  Given a homomorphism $f : G \to H$ between finitely presented groups%
  \footnote{Recall that these presentations may have nothing to do with the 
  underlying graphs} where $(G,\mathfrak{g})$ and $(H,\mathfrak{h})$ are 
  moreover $AC$-coalgebras, then there is an algorithm deciding 
  whether $f$ is $A \varphi$ for $\varphi$ a graph homomorphism of the graphs 
  presenting $G$ and $H$.
\end{thm}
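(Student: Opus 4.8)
The plan is to reduce the decision problem to a finite commutativity check using the cohomomorphism square from the Main Corollary. By that corollary, $f : G \to H$ is of the form $A\varphi$ for a graph homomorphism $\varphi$ if and only if the square
\[
\begin{tikzcd}
G & H \\
ACG & ACH
\arrow["\mathfrak{g}", from=1-1, to=2-1]
\arrow["\mathfrak{h}", from=1-2, to=2-2]
\arrow["f", from=1-1, to=1-2]
\arrow["ACf"', from=2-1, to=2-2]
\end{tikzcd}
\]
commutes. So the entire problem is decidable provided we can (a) build the map $ACf$ explicitly, and (b) check equality of two group homomorphisms $G \to ACH$, namely $ACf \circ \mathfrak{g}$ and $\mathfrak{h} \circ f$.

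First I would observe that all four sides of the square are computable from the given data. We are handed finite presentations of $G$ and $H$ and the maps $\mathfrak{g}, \mathfrak{h}$, and $f$ on generators. The construction $ACf$ is completely explicit: $ACG$ is freely generated by symbols $[g]$ for $g \in G$ subject to $[g][g'] = [g'][g]$ exactly when $g, g'$ commute, and $ACf$ sends $[g] \mapsto [fg]$. Thus each of the four maps is specified by its effect on the finitely many generators of $G$.

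Second, I would reduce commutativity of the square to finitely many equalities in $ACH$. Since $\mathfrak{g}$, $f$, $\mathfrak{h}$, and $ACf$ are all group homomorphisms, the two composites $ACf \circ \mathfrak{g}$ and $\mathfrak{h} \circ f$ are themselves homomorphisms $G \to ACH$, so they agree on all of $G$ if and only if they agree on each of the finitely many generators $x_1, \dots, x_n$ of $G$. For each generator $x_i$ we compute two elements of $ACH$: push $x_i$ through $\mathfrak{g}$ and then apply $ACf$ letterwise, versus push $x_i$ through $f$ and then through $\mathfrak{h}$. The decision then reduces to checking, for each $i$, whether these two words represent the same element of $ACH$.

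The main obstacle — and the step that makes the algorithm actually work — is that this last check is the \emph{word problem in the raag} $ACH$. Since $ACH$ is itself a right angled artin group, its word problem is solvable (indeed efficiently, via Green's normal form from Theorem 3.9 of \cite{greenGraphProductsGroups1990}, which we already invoked): two words are equal iff their left-greedy central forms coincide up to commuting letters within each central block, and this normal form is computable. Hence each of the $n$ equality checks terminates, and since $n$ is finite the whole procedure halts and correctly decides whether $f = A\varphi$ for some graph homomorphism $\varphi$. I would close by noting that the cost is dominated by $n$ normal-form comparisons in $ACH$, so the procedure is not merely decidable but efficient, matching the claim in the Remark following the Main Corollary.
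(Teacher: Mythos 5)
Your proposal is correct and follows essentially the same route as the paper: reduce via the equivalence $\Gph \simeq \Grp_{AC}$ to checking commutativity of the cohomomorphism square on the finitely many generators of $G$, then decide each resulting equality using the solvable word problem in the raag $ACH$. The only detail the paper makes explicit (in a footnote) that you leave implicit is that $ACH$ is not finitely generated since $CH$ is infinite, so the word-problem check must be carried out in the subraag on the finite subgraph of $CH$ spanned by the letters that actually appear --- which your normal-form comparison does in effect.
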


\begin{proof}
    By the equivalence $\mathsf{Gph} \simeq \Grp_{AC}$, this amounts to 
    checking if $f$ is a cohomomorphism -- that is, whether the square

    \[
        \begin{tikzcd}
            G & H \\
            ACG & ACH
            \arrow["f", from=1-1, to=1-2]
            \arrow["\mathfrak{g}"', from=1-1, to=2-1]
            \arrow["ACf"', from=2-1, to=2-2]
            \arrow["\mathfrak{h}", from=1-2, to=2-2]
        \end{tikzcd}
    \]

    commutes. Of course, we can check this on the (finitely many) generators
    of $G$, 
    and the claim now follows from the fact that $ACH$ is a raag%
    \footnote{We have to be a bit careful, since $CH$ is infinite, so that 
    $ACH$ is not finitely generated. However, the images of each generator 
    of $G$ will land in a finite subgraph of $CH$, so we can do our 
    computation inside the raag associated to that finite subgraph.}, and thus
    has solvable word problem \cite{charneyIntroductionRightangledArtin2007}.
\end{proof}

\begin{cor}
  There is an algorithm to recover $\Gamma$ from the mere isomorphism 
  type $G$ of $A \Gamma$.
\end{cor}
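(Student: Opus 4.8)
The plan is to combine the equivalence $\Gph \simeq \Grp_{AC}$ with the concrete descriptions of the counit $\epsilon_G$ and a structure map $\mathfrak{g}$ in order to read the vertices and edges of $\Gamma$ directly off the coalgebra $(G,\mathfrak{g})$. First I would invoke the procedure described earlier in this section to produce \emph{some} $AC$-coalgebra structure $\mathfrak{g} : G \to ACG$; this search terminates because $G$ is promised to be a raag. By the equivalence, any such $\mathfrak{g}$ exhibits $(G,\mathfrak{g})$ as $(A\Gamma', A\eta)$ for a graph $\Gamma'$, and since $A\Gamma' \cong G \cong A\Gamma$, Droms' theorem forces $\Gamma' \cong \Gamma$. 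In particular it does not matter which structure map the search returns: every valid $\mathfrak{g}$ will recover an isomorphic copy of $\Gamma$.

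The heart of the argument is recovering the vertices. As remarked at the end of Section \ref{eg}, the vertices of $\Gamma$ are exactly those $g \in G$ with $\mathfrak{g}(g) = [g]$, i.e. those whose decomposition is a single letter. To turn this into a finite computation I would fix a finite generating set $x_1, \dots, x_m$ of $G$ and compute the finitely many words $\mathfrak{g}(x_1), \dots, \mathfrak{g}(x_m) \in ACG$. I then claim the vertex set is precisely the finite set of elements $g \in G$ such that $[g]$ occurs as a letter in some $\mathfrak{g}(x_i)$. Under the coalgebra isomorphism $(G,\mathfrak{g}) \cong (A\Gamma, A\eta)$ each $\mathfrak{g}(x_i)$ corresponds to $A\eta(y_i)$ for some $y_i \in A\Gamma$, and thus lies in the sub-raag of $ACG$ generated by the vertex-letters; so every letter appearing is of the form $[v]$ with $v$ a vertex. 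Conversely each vertex $v$ satisfies $[v] = \mathfrak{g}(v) \in \langle \mathfrak{g}(x_1), \dots, \mathfrak{g}(x_m) \rangle$, and in a raag a vertex lying in a subgroup must appear in the support of one of that subgroup's generators. Together these pin down the vertex set exactly, as a finite and explicitly computable subset of $G$.

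For the edges, recall that by the definition of $A\Gamma$ two vertices $v$ and $w$ are adjacent exactly when they commute in $G$. I would therefore decide adjacency for each of the finitely many pairs of recovered vertices by testing whether $[v,w] = 1$ in $G$. This is decidable because $G$, being isomorphic to a raag, is residually finite, and a finitely presented residually finite group has solvable word problem (run in parallel the two semi-decision procedures enumerating consequences of the relators and enumerating finite quotients). Adjoining the reflexive self-loops dictated by our conventions yields a finite graph which, by the equivalence $\Gph \simeq \Grp_{AC}$, is isomorphic to $\Gamma$.

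The step I expect to be the main obstacle is the second one: replacing the quantifier over all of the infinite group $G$ by a \emph{finite}, effective search. The crux is the support argument showing that the letters occurring in $\mathfrak{g}(x_1), \dots, \mathfrak{g}(x_m)$ are neither too few (every vertex appears, by generation) nor too many (only vertices appear, since each $\mathfrak{g}(x_i)$ lands in the sub-raag on the vertex-letters). Once this is secured, everything else is bookkeeping on top of the already-established equivalence and the solvability of the word problem.
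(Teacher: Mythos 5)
Your proposal is correct, and its core skeleton matches the paper's: find a coalgebra structure $\mathfrak{g}$ by search, identify the vertices of $\Gamma$ with the elements $g \in G$ satisfying $\mathfrak{g}(g) = [g]$, and recover edges by testing commutation. Where you genuinely diverge is in how you make the vertex search \emph{terminate}. The paper enumerates elements of $G$ and checks $\mathfrak{g}(g) = [g]$ one at a time, using the fact that the number of vertices equals $\mathrm{rk}(G^{\mathrm{ab}})$ as a stopping criterion; once that many solutions are found, all have been found. You instead localize the vertex set inside a finite candidate set from the start: the letters appearing in $\mathfrak{g}(x_1), \dots, \mathfrak{g}(x_m)$ for a finite generating set, justified by a parabolic-subgroup argument (an element of a raag lies in the parabolic on the union of the supports of any generating set of a subgroup containing it, and a vertex generator lying in a parabolic must be one of its vertices). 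This buys you a bounded search with no unbounded enumeration over $G$, and it also avoids invoking the abelianization-rank fact; the cost is reliance on standard but unproved-here raag facts (well-definedness of support, properties of parabolics). One caveat you should make explicit: ``$[g]$ occurs as a letter in $\mathfrak{g}(x_i)$'' must mean occurs in the \emph{reduced} (normal) form, since the words returned by the search may contain cancelling letters $[h]$ with $h$ not a vertex; so you must first reduce each $\mathfrak{g}(x_i)$, which needs exactly the word-problem machinery you already invoke (residual finiteness plus McKinsey) for the edge step. Finally, your observation that \emph{any} valid structure map recovers $\Gamma$ up to isomorphism, via Droms' theorem, is a point the paper leaves implicit and is worth keeping.
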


\begin{proof}
  We know that the vertices of $\Gamma$ are in bijection with 
  graph homomorphisms from the one-vertex graph $1$ to $\Gamma$. 
  By the equivalence $\mathsf{Gph} \simeq \Grp_{AC}$, this amounts 
  to cohomomorphisms $\mathbb{Z} \to G$, which one can explicitly 
  calculate to be those elements $g \in G$ so that $\mathfrak{g}(g) = [g]$.

  Since we know that the number of vertices of $\Gamma$ is equal to the 
  rank of the abelianization $G^\text{ab}$, we can keep checking elements 
  of $G$ to see if $\mathfrak{g}(g) = [g]$. This algorithm terminates 
  because once we've found $\text{rk}(G^\text{ab})$ many such elements, 
  we must have found all of them.

  Finally, we see that the following conditions are equivalent:
  \begin{enumerate}
    \item Two elements $g_1, g_2$ represent adjacent elements in $\Gamma$ 
    \item $g_1$ and $g_2$ commute in $G$
    \item $[g_1]$ and $[g_2]$ commute in $ACG$
    \item There is a cohomomorphism from $A(\bullet - \bullet)$ to $G$ 
      sending the two vertices to $g_1$ and $g_2$
  \end{enumerate}
  
\end{proof}

\section{Conclusion}
\label{conclusion}

It has been well known for some time now that the combinatorics of a graph
$\Gamma$ are reflected in the algebra of its raag $A\Gamma$, but the 
question of how the combinatorics of graph homomorphisms relates to 
group homomorphisms between raags remains fertile ground. In this paper we've 
shown that the connection remains strong, by showing that the category of
(reflexive) graphs embeds faithfully as an explicit subcategory of 
the category of groups. 

More speculatively, while this paper focused on the comonad $AC : \Grp \to \Grp$,
we suspect there is a future role to be played by the monad 
$CA : \mathsf{Gph} \to \mathsf{Gph}$. Indeed, Kim and Koberda conjecture 
in \cite{kimEmbedabilityRightangledArtin2013} that embeddings 
$A\Gamma \to A \Delta$ exist exactly when $\Gamma$ embeds into a 
graph $\Delta^e$ which they call the \emph{extension graph}.
This graph is closely related to the monad graph $CA\Delta$ 
(indeed, it's the full subgraph of $CA\Delta$ on the conjugates of generators), 
as we might expect since maps $A\Gamma \to A \Delta$ are in natural 
bijection with maps $\Gamma \to CA \Delta$. 

While the extension graph conjecture is now known to be false in general
\cite{casals-ruizEmbedddingsPartiallyCommutative2013}, it is true for 
many classes of graphs. In some sense this is likely ``because of'' the close 
connection of the extension graph with the monad graph. It would be interesting 
to see if category theoretic techniques can be brought to bear on a new version 
of this conjecture, by finding a combinatorial condition which picks out those 
embeddings $\Gamma \to CA \Delta$ which transpose to an embedding of raags.

\section*{Acknowledgements}

The author would like to thank Matt Durham, Jacob Garcia, Thomas Koberda, and 
Peter Samuelson for helpful conversations and encouragement during the 
writing of this paper. Additionally, the author would like to thank 
Carl-Fredrik Nyberg-Brodda for a preliminary reading, and informing them 
about the Casals-Ruiz paper falsifying the extension graph conjecture.

\newpage
\bibliographystyle{plain}
\bibliography{descent-raags.bib}

\end{document}